\theoremstyle{plain}
\numberwithin{equation}{section}
\newcommand{\abs}[1]{\left\lvert #1 \right\rvert}
\newcommand{\dx}{\;\mbox{d}x}
\newcommand{\dxy}{\;\mbox{d}x_1\mbox{d}x_2}
\newcommand{\osc}[1]{\underset{#1}{\mathrm{osc}}\,}
\newcommand{\p}{\mathbbm{p}}
\theoremstyle{definition}
\newtheorem{Teo}{Theorem}[section]
\newtheorem{Lemma}[Teo]{Lemma}
\newtheorem{rem}[Teo]{Remark}
\newtheorem{cor}[Teo]{Corollary}
\newtheorem{Prop}[Teo]{Proposition}
\newtheorem{theorem}{Theorem}[section]
\DeclareRobustCommand{\gobblefour}[4]{}
\begin{document}

\title[Regularity for an anisotropic equation in the plane]
{Regularity for an anisotropic equation in the plane}
\author{Peter Lindqvist}
\address{Peter Lindqvist,  Norwegian University of Science and Technology, Department of Mathematics, N-7491 Trondheim, Norway}
\email{peter.lindqvist@ntnu.no}
\author{Diego Ricciotti}
\address{Diego Ricciotti, University of Pittsburgh, Department of Mathematics, 
301 Thackeray Hall, Pittsburgh, PA 15260, USA}
\email{DIR17@pitt.edu}

\begin{abstract}
We present a simple proof of the $C^1$ regularity of $p$-anisotropic  functions in the plane for $2\leq p<\infty$. We achieve a logarithmic modulus of continuity for the derivatives. The monotonicity (in the sense of Lebesgue) of the derivatives is used. The case with two exponents is also included.
\end{abstract}
\maketitle
\tableofcontents

\section{Introduction}

The minimization of the ``anisotropic" variational integral
\begin{equation}\label{anisotropicIntegral}
I_{\Omega}(v)=\int_{\Omega}\sum_{i=1}^n\frac{1}{p_i}\left\lvert \frac{\partial v}{\partial x_i} \right\rvert^{p_i}\!\dx
\end{equation}
over functions $v(x)=v(x_1,\cdots,x_n)$ with given values on the boundary of the bounded domain $\Omega\subset\mathbb{R}^n$, leads to the Euler-Lagrange equation
\begin{equation}\label{anisotropicEquation}
\int_{\Omega}\sum_{i=1}^n\left\lvert \frac{\partial u}{\partial x_i} \right\rvert^{p_i-2}\frac{\partial u}{\partial x_i}\frac{\partial \phi}{\partial x_i}\dx =0
\end{equation} 
for all test functions $\phi\in C_0^\infty(\Omega)$. 
Denoting by $\p=(p_1,\cdots,p_n)$, it is required that a solution $u$ belongs to the anisotropic Sobolev space
$$ W^{1,\p}(\Omega) :=\left\{ u\in W^{1,1}(\Omega)\;:\;u_{x_i}\in L^{p_i}(\Omega)\;,\; i=1,\cdots,n \right\}. $$
Formally one has the equation
\begin{equation*}
\sum_{i=1}^n \frac{\partial}{\partial x_i}\left( \left\lvert \frac{\partial u}{\partial x_i} \right\rvert^{p_i-2}\!\frac{\partial u}{\partial x_i}\right)=0
\end{equation*}
in $\Omega$.

The equation is demanding even in the plane. We shall restrict ourselves to the case $n=2$ and $2\leq p_1\leq p_2< \infty$.
Our object is the continuity of the gradient $\nabla u =(u_{x_1}, u_{x_2})$ in the plane.
The recent work \cite{BB} of P. Bousquet and L. Brasco is devoted to the ``orthotropic equation", as they call it, 
\begin{equation}\label{pOrthotropic}
\frac{\partial}{\partial x_1}\left( \left\lvert\frac{\partial u}{\partial x_1}\right\rvert^{p-2}\!\frac{\partial u}{\partial x_1} \right)
+\frac{\partial}{\partial x_2}\left( \left\lvert\frac{\partial u}{\partial x_2}\right\rvert^{p-2}\!\frac{\partial u}{\partial x_2} \right)
=0
\end{equation}
in $\Omega$, with only one exponent $1<p<\infty$. They proved that $u\in C^1_{loc}(\Omega)$.
Our first result is a very simple proof of the continuity of the gradient.

\begin{theorem}\label{Theorem1}
Let $p\geq 2$ and suppose that $u\in W^{1,p}(\Omega)$ is a solution of \eqref{pOrthotropic} in $\Omega$. Then $\nabla u$ is continuous and
\begin{equation*}
\osc{B_r}(\nabla u) \leq A \left( \frac{1}{R^2\log\left(\frac{R}{r}\right)}
\iint_{B_{2R}}|\nabla u|^p \dxy\right)^\frac{1}{p}
\end{equation*} 
where $A=A(p)$ and $B_r$, $B_R$ are concentric balls $B_r\subset B_R\subset B_{2R}\subset\subset\Omega$.
\end{theorem}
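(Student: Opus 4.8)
The plan is to differentiate the equation, observe that each partial derivative then solves a degenerate \emph{linear} equation with nonnegative diagonal coefficients, and feed the resulting monotonicity into a two–dimensional logarithmic oscillation estimate. Differentiating \eqref{pOrthotropic} in the direction $x_k$ shows (formally) that $w=u_{x_k}$ is a weak solution of
\[
\sum_{i=1}^2 \frac{\partial}{\partial x_i}\left( (p-1)\abs{u_{x_i}}^{p-2}\,\frac{\partial w}{\partial x_i}\right) = 0 .
\]
This is a divergence–form equation whose coefficients $a_i=(p-1)\abs{u_{x_i}}^{p-2}$ are nonnegative, so it obeys the weak maximum principle on every subdomain; hence $u_{x_1}$ and $u_{x_2}$ are monotone in the sense of Lebesgue. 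Since $t\mapsto\abs{t}^{(p-2)/2}t$ is increasing, the composite functions
\[
F_i:=\abs{u_{x_i}}^{\frac{p-2}{2}}\,u_{x_i},\qquad i=1,2,
\]
inherit the same monotonicity, their extrema over a ball being attained on the bounding circle.

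The key algebraic observation is that the two differentiated equations \emph{together} control exactly the full Dirichlet integral of $F_1$ and $F_2$. Testing the equation for $w=u_{x_1}$ with $u_{x_1}\zeta^2$ and that for $w=u_{x_2}$ with $u_{x_2}\zeta^2$, where $\zeta$ is a cutoff equal to $1$ on $B_R$, supported in $B_{2R}$, with $\abs{\nabla\zeta}\leq C/R$, a standard Caccioppoli computation and the symmetry $u_{x_1x_2}=u_{x_2x_1}$ yield
\[
\iint\!\Big(\abs{u_{x_1}}^{p-2}\abs{u_{x_1x_1}}^2+\big(\abs{u_{x_1}}^{p-2}+\abs{u_{x_2}}^{p-2}\big)\abs{u_{x_1x_2}}^2+\abs{u_{x_2}}^{p-2}\abs{u_{x_2x_2}}^2\Big)\zeta^2\dxy\leq\frac{C}{R^2}\iint_{B_{2R}}\abs{\nabla u}^p\dxy,
\]
where on the right one uses $p\geq2$ to absorb $\abs{u_{x_j}}^{p-2}\abs{u_{x_i}}^2\leq\abs{\nabla u}^p$. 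Because $\nabla F_i=\tfrac{p}{2}\abs{u_{x_i}}^{(p-2)/2}\nabla u_{x_i}$, the integrand on the left equals $\tfrac{4}{p^2}\big(\abs{\nabla F_1}^2+\abs{\nabla F_2}^2\big)$. Thus $F_1,F_2\in W^{1,2}_{loc}$ with
\[
\iint_{B_R}\big(\abs{\nabla F_1}^2+\abs{\nabla F_2}^2\big)\dxy\leq\frac{C}{R^2}\iint_{B_{2R}}\abs{\nabla u}^p\dxy .
\]

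With monotonicity and finite Dirichlet energy in hand, I would invoke the classical planar oscillation lemma for Lebesgue–monotone functions: if $v\in W^{1,2}(B_R)$ is monotone, then in polar coordinates $\osc{B_\rho}v=\osc{\partial B_\rho}v\leq\int_{\partial B_\rho}\abs{\nabla v}\,ds$ for a.e.\ $\rho$, and dividing by $\rho$, integrating over $\rho\in(r,R)$ and applying Cauchy--Schwarz gives
\[
\osc{B_r}(v)\leq\sqrt{2\pi}\left(\frac{1}{\log(R/r)}\iint_{B_R}\abs{\nabla v}^2\dxy\right)^{\frac12}.
\]
Applying this to $v=F_i$ and combining with the Caccioppoli bound gives $\osc{B_r}(F_i)\leq C\big(R^{-2}\log(R/r)^{-1}\iint_{B_{2R}}\abs{\nabla u}^p\big)^{1/2}$. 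Finally, the inverse of $t\mapsto\abs{t}^{(p-2)/2}t$ is the $\tfrac{2}{p}$–Hölder map $s\mapsto\abs{s}^{2/p-1}s$, so $\osc{B_r}(u_{x_i})\leq C\,\osc{B_r}(F_i)^{2/p}$; raising the previous bound to the power $2/p$ collapses the exponent $\tfrac12\cdot\tfrac2p=\tfrac1p$, and summing over $i=1,2$ produces exactly the asserted estimate for $\osc{B_r}(\nabla u)$.

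The main obstacle I anticipate is making the differentiation rigorous in the presence of degeneracy: the coefficients $a_i=(p-1)\abs{u_{x_i}}^{p-2}$ vanish where $u_{x_i}=0$, so neither the second–order Sobolev regularity needed to write $\nabla F_i$, nor the maximum principle giving monotonicity, is immediate. I would handle this by regularizing, replacing $\abs{u_{x_i}}^{p-2}$ by $(\varepsilon+\abs{u_{x_i}}^2)^{(p-2)/2}$, for which the approximating solutions are smooth and their derivatives genuinely satisfy a nondegenerate elliptic equation enjoying the maximum principle. The Caccioppoli and oscillation estimates above are uniform in $\varepsilon$, so monotonicity and the final inequality survive the limit $\varepsilon\to0$; the remaining care is purely in justifying these limits and the cutoff manipulations.
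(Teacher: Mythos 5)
Your proposal is correct and follows essentially the same route as the paper: regularize the equation, use the maximum principle for the differentiated equation to get Lebesgue monotonicity of $u_{x_i}$ (hence of $|u_{x_i}|^{(p-2)/2}u_{x_i}$), combine a Caccioppoli bound for $\nabla\left(|u_{x_i}|^{(p-2)/2}u_{x_i}\right)$ with Lebesgue's logarithmic oscillation lemma in the plane, and convert back via the elementary $p$-inequality before passing to the limit. The only cosmetic difference is your choice of regularization $(\varepsilon+|u_{x_i}|^2)^{(p-2)/2}$ versus the paper's added linear term $\epsilon(p_i-1)u_{x_i}$, which serves the identical purpose.
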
 

The advantage of our proof is, besides its simplicity, that a modulus of continuity of the size
$$\left(\log\left(\frac{1}{r}\right)\right)^{-\frac{1}{p}}$$ 
is provided. 
The main ingredient is an elementary inequality used by Lebesgue in 1907, valid for functions that are monotone (in the sense of Lebesgue). We exploit the fact that the partial derivative $u_{x_i}$ obeys the maximum and minimum principle, a key property observed in \cite{BB} Lemma 2.6 .

Second, we consider the equation
\begin{equation}
\frac{\partial}{\partial x_1}\left( \left\lvert\frac{\partial u}{\partial x_1}\right\rvert^{p_1-2}\!\frac{\partial u}{\partial x_1} \right)
+\frac{\partial}{\partial x_2}\left( \left\lvert\frac{\partial u}{\partial x_2}\right\rvert^{p_2-2}\!\frac{\partial u}{\partial x_2} \right)
=0
\end{equation}
in $\Omega$, under the restriction $2\leq p_1<p_2$.

\begin{theorem}\label{Theorem2}
Let $\p=(p_1,p_2)$ with $2\leq p_1<p_2$ and assume that 
$u\in W^{1,\p}(\Omega)$. 
If $u=u(x_1,x_2)$ is a solution of equation \eqref{anisotropicEquation}, then the gradient $\nabla u$ is continuous and
\begin{equation}\label{estimate2}
\osc{B_r}\left( u_{x_i}\right)
\leq A\left(\frac{1}{R^2\log(R/r)}\iint_{B_{2R}}(|\nabla u|^{p_1}+|\nabla u|^{p_2})\dxy\right)^\frac{1}{p_i},
\end{equation}
where $A=A(p_1,p_2)$ and $B_r$, $B_R$ are concentric balls $B_r\subset B_R\subset B_{2R}\subset\subset\Omega$. The integral converges.
\end{theorem}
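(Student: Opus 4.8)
The plan is to run, for each index $i\in\{1,2\}$, the same three-step scheme behind Theorem \ref{Theorem1}, but now keeping careful track of the two distinct exponents. The monotone quantity attached to the $i$-th direction will be
\[
W_i := \abs{u_{x_i}}^{\frac{p_i-2}{2}}u_{x_i},\qquad \abs{W_i}=\abs{u_{x_i}}^{p_i/2}.
\]
First I would differentiate \eqref{anisotropicEquation} in the direction $x_k$: setting $v=u_{x_k}$, one sees that $v$ is a weak solution of the linearised equation $\sum_i\partial_{x_i}(a_i\,\partial_{x_i}v)=0$ with non-negative coefficients $a_i=(p_i-1)\abs{u_{x_i}}^{p_i-2}$. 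By the maximum and minimum principle for this equation (the property recorded in \cite{BB}, Lemma 2.6), each $u_{x_k}$ is monotone in the sense of Lebesgue; since $t\mapsto\operatorname{sgn}(t)\abs{t}^{p_k/2}$ is continuous and increasing, the composition $W_k$ is monotone in the sense of Lebesgue as well.

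The second step is a Caccioppoli estimate for the $W_k$. Testing the linearised equation for $v=u_{x_k}$ with $\eta^2 u_{x_k}$, where $\eta$ is a cut-off between $B_R$ and $B_{2R}$, and applying Young's inequality gives
\[
\sum_i\iint_{B_R} a_i\,(\partial_{x_i}u_{x_k})^2\,\eta^2\dxy \;\le\; C\sum_i \iint_{B_{2R}} \abs{u_{x_i}}^{p_i-2}\,u_{x_k}^2\,(\partial_{x_i}\eta)^2\dxy .
\]
On the right one bounds $\abs{u_{x_i}}^{p_i-2}u_{x_k}^2\le \abs{\nabla u}^{p_i}$, so after summing over $k$ the right-hand side is controlled by $R^{-2}\iint_{B_{2R}}(\abs{\nabla u}^{p_1}+\abs{\nabla u}^{p_2})\dxy$. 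The crucial point, special to the plane, is that after summation over $k$ the left-hand side is comparable (with a constant depending only on $p_1,p_2$) to $\sum_k\iint\abs{\nabla W_k}^2$: both are built from the same three monomials, and in particular the only mixed second derivative $u_{x_1x_2}$ enters, in each expression, through the identical symmetric weight $(\abs{u_{x_1}}^{p_1-2}+\abs{u_{x_2}}^{p_2-2})\,u_{x_1x_2}^2$, so the apparent off-diagonal mismatch cancels. Hence
\[
\sum_{k}\iint_{B_R}\abs{\nabla W_k}^2\dxy \;\le\; \frac{C}{R^2}\iint_{B_{2R}}(\abs{\nabla u}^{p_1}+\abs{\nabla u}^{p_2})\dxy .
\]

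With the Dirichlet energy of each $W_k$ in hand, the third step is Lebesgue's elementary inequality for monotone functions in the plane, $(\osc{B_r}W_k)^2\le C(\log(R/r))^{-1}\iint_{B_R}\abs{\nabla W_k}^2\dxy$, which combined with the previous display yields $(\osc{B_r}W_k)^2\le C(R^2\log(R/r))^{-1}\iint_{B_{2R}}(\abs{\nabla u}^{p_1}+\abs{\nabla u}^{p_2})\dxy$. It then remains to descend from $W_k$ back to $u_{x_k}$: since the inverse map $s\mapsto\operatorname{sgn}(s)\abs{s}^{2/p_k}$ is Hölder continuous with exponent $2/p_k\le 1$, one has $\osc{B_r}u_{x_k}\le C(\osc{B_r}W_k)^{2/p_k}$, and raising this to the power $p_k$ produces exactly \eqref{estimate2}. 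Letting $r\to0$ gives the continuity of $\nabla u$.

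The main obstacle is not the (rather pleasant) planar cancellation but the qualitative regularity required to legitimise the scheme, and this is precisely where the two exponents genuinely bite. To differentiate the equation and to test with second derivatives one needs $u_{x_k}\in W^{1,2}_{\mathrm{loc}}$ (equivalently $W_k\in W^{1,2}_{\mathrm{loc}}$), and to even assert that the right-hand side of \eqref{estimate2} is finite one needs $\abs{\nabla u}^{p_2}\in L^1_{\mathrm{loc}}$, that is $u_{x_1}\in L^{p_2}_{\mathrm{loc}}$, although the energy space only furnishes $u_{x_1}\in L^{p_1}_{\mathrm{loc}}$ with $p_1<p_2$. I would secure both points by a regularisation argument: replace the degenerate principal part by a uniformly elliptic approximation with smooth solutions, derive the two displays above with constants depending only on $p_1,p_2$, and pass to the limit. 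Because the bound \eqref{estimate2} is stable under this passage, it transfers to $u$ and in particular forces the stated local higher integrability, so that the integral on the right indeed converges.
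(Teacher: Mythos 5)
Your scheme for the first part --- monotonicity of $u_{x_k}$ (hence of $W_k$), a Caccioppoli estimate for $\sum_k\iint|\nabla W_k|^2$, Lebesgue's oscillation inequality, and the H\"older continuity of $s\mapsto \operatorname{sgn}(s)|s|^{2/p_k}$ to descend back to $u_{x_k}$ --- is exactly the paper's route, and your observation that the mixed derivative enters both quadratic forms through the same weight is correct (it is the content of Corollary \ref{Corollary}). The gap is in the last paragraph, and it is precisely the point on which the whole theorem turns. Your Caccioppoli right-hand side contains the term $\iint|u_{x_2}|^{p_2-2}u_{x_1}^2|\nabla\eta|^2$ (the ``unfavourable exponents'' of Remark 2.3), which you majorize by $\iint|\nabla u|^{p_2}$; this requires $u_{x_1}\in L^{p_2}_{loc}$, whereas the energy space only gives $u_{x_1}\in L^{p_1}_{loc}$ with $p_1<p_2$. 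You acknowledge this, but your proposed remedy --- ``regularise, derive the displays for the smooth approximations, and pass to the limit; the bound is stable'' --- is circular. For the regularized solutions $u^\epsilon$ the quantity $\iint_{B_{2R}}|u^\epsilon_{x_1}|^{p_2}\dxy$ is finite for each fixed $\epsilon$, but the energy identity obtained by testing with $u^\epsilon-u$ controls only $\iint|u^\epsilon_{x_1}|^{p_1}$ and $\iint|u^\epsilon_{x_2}|^{p_2}$ uniformly in $\epsilon$; nothing in your argument supplies a uniform-in-$\epsilon$ bound on $\iint|u^\epsilon_{x_1}|^{p_2}$, so the right-hand side of your oscillation estimate may a priori blow up as $\epsilon\to0$. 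One cannot ``force the stated local higher integrability'' from an estimate whose right-hand side is not yet known to be bounded.

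This is exactly why the paper devotes Section 5 to Proposition \ref{PropositionBeta}: under the extra hypothesis $p_2<p_1+2$ it establishes, via the iteration of \cite{ELM} over shrinking concentric disks (with a second regularization in $\sigma$ and a mollification of the boundary data to avoid assuming the finiteness one is trying to prove), the bound
$\iint_{B_r}|u^{\epsilon}_{x_1}|^{p_2}\dxy \leq C\bigl(\iint_{B_R}(1+|u_{x_1}|^{p_1}+|u_{x_2}|^{p_2})\dxy\bigr)^{\beta}$,
which is uniform in $\epsilon$ and makes the limit passage legitimate. For general $p_2\geq p_1+2$ the paper instead invokes the local Lipschitz continuity from \cite{BLPV}. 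Your proof needs one of these two inputs (or an equivalent higher-integrability argument); as written, the key analytic step is missing.
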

Here we encounter an extra difficulty. Naturally 
$ u_{x_1}\in L^{p_1}(\Omega)$, 
$ u_{x_2}\in L^{p_2}(\Omega)$,
and, consequently, $u_{x_2}\in L^{p_1}(\Omega)$, 
but one cannot assume
$u_{x_1}\in L^{p_2}(\Omega)$.
Indeed, the term 
$|v_{x_1}|^{p_2}$
is not present in the variational integral \eqref{anisotropicIntegral}.
This difficulty is discussed in \cite{Leonetti}. Under the restriction $p_2<p_1+2$, this problem is settled in Proposition \ref{PropositionBeta} below, the proof of which is a direct adaptation of the method in \cite{ELM}. By a recent result in \cite{BLPV}, the solution of \eqref{anisotropicEquation} is locally Lipschitz continuous ($n=2,\,\, 2\leq p_1\leq p_2$), see  Theorem 1.4 and Remark 1.5 there. By Rademacher's Theorem the gradient belongs to $L^{\infty}_{loc}(\Omega).$
Thus the integral in the right hand side is convergent also for $p_2 \geq p_1+2.$ Nonetheless, we have included a sketch of the proof based on the iteration in \cite{ELM}, since the extra assumption leads to a considerable simplification. Furthermore, this approach seems to allow a generalization to the vector valued case.

\subsection{Acknowledgments} We thank Lorenzo Brasco for a discussion about \cite{BLPV}.
P.L. was partially supported by the Norwegian Research Council. He wants to thank the University of Pittsburgh for its hospitality.

\section{Standard Estimates}
\paragraph{\bf Notation.}

We use standard notation.
$B_r=B_r(a)$ denotes the ball $\{x\in\mathbb{R}^2 \,:\, |x-a|<r\}$ and when several balls like $B_r$, $B_R$ appear in the same formula they are assumed to be concentric.
Usually, $\sum_i$ means $\sum_{i=1}^2$, although the formulas in this section are valid also in $n$ dimensions.
A variable subscript in a function denotes a derivative with respect to that variable, e.g. $v_{x_i}=\frac{\partial v}{\partial x_i}$ and $v_{x_ix_j}=\frac{\partial^2 v}{\partial x_i\partial x_j}$.\\

\medskip
\paragraph{\bf Regularization.}

We shall regularize the equation so that at least second continuous derivatives are available. 
The variational integral
\begin{equation*}
I^\epsilon_{\Omega}(v)=\sum_i \iint_{\Omega} \left(\frac{|v_{x_i}|^{p_i}}{p_i}+\epsilon(p_i-2) \frac{v_{x_i}^2}{2}\right) \dxy \quad \epsilon >0,
\end{equation*}
has Euler-Lagrange equation
\begin{equation}\label{anisotropicEquationRegularized}
\sum_i \iint_{\Omega} \left(|u_{x_i}|^{p_i-2}u_{x_i}+\epsilon(p_i-1)u_{x_i}\right) \phi_{x_i}\dxy=0
\end{equation}
valid for all $\phi\in C_0^\infty(\Omega)$. Let $u^\epsilon\in W^{1,\p}(\Omega)$ denote a solution. By elliptic regularity theory, $u^\epsilon$ is smooth.\\

\medskip

\paragraph{\bf Estimates.}

Below $\xi\in C_0^\infty(\Omega)$ is a test function, $0\leq \xi \leq 1$.
Recall that $p_1\leq p_2$.


\begin{Lemma}
Let $u^\epsilon$ be a solution of \eqref{anisotropicEquationRegularized}.
We have
\begin{equation*}
\begin{split}
\sum_i \iint_{\Omega} \xi^{p_2} |u^\epsilon_{x_i}|^{p_i}\dxy
\leq a&\sum_i \iint_{\Omega} \xi^{p_2-p_i}|\xi_{x_i}|^{p_i}|u^\epsilon|^{p_i}\dxy\\
&+\epsilon(p_2-1)p_2^2\iint_{\Omega}\xi^{p_2-2}|\nabla\xi|^2|u^\epsilon|^2\dxy,
\end{split}
\end{equation*}
where $a=a(p_1,p_2)$.
\end{Lemma}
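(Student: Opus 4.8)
The plan is to test the regularized equation \eqref{anisotropicEquationRegularized} with $\phi=\xi^{p_2}u^\epsilon$, which is admissible since $u^\epsilon$ is smooth by elliptic regularity and $\xi\in C_0^\infty(\Omega)$. Writing $\phi_{x_i}=\xi^{p_2}u^\epsilon_{x_i}+p_2\xi^{p_2-1}\xi_{x_i}u^\epsilon$ and inserting it into \eqref{anisotropicEquationRegularized}, the contributions in which the nonlinearity meets $\xi^{p_2}u^\epsilon_{x_i}$ produce the two non-negative ``energy'' terms
\[
\sum_i\iint_{\Omega} \xi^{p_2}|u^\epsilon_{x_i}|^{p_i}\dxy \qquad\text{and}\qquad \sum_i\epsilon(p_i-1)\iint_{\Omega}\xi^{p_2}(u^\epsilon_{x_i})^2\dxy,
\]
while the terms carrying $p_2\xi^{p_2-1}\xi_{x_i}u^\epsilon$ are moved to the right-hand side. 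Taking absolute values, these cross terms are dominated by
\[
p_2\sum_i\iint_{\Omega}|u^\epsilon_{x_i}|^{p_i-1}\xi^{p_2-1}|\xi_{x_i}||u^\epsilon|\dxy + p_2\sum_i\epsilon(p_i-1)\iint_{\Omega}|u^\epsilon_{x_i}|\xi^{p_2-1}|\xi_{x_i}||u^\epsilon|\dxy.
\]

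The heart of the argument is a careful application of Young's inequality to each cross term, splitting the powers of $\xi$ so that the absorbable part matches an energy term exactly. For the nonlinear term I would write the integrand as the product of $|u^\epsilon_{x_i}|^{p_i-1}\xi^{p_2(p_i-1)/p_i}$ and $\xi^{(p_2-p_i)/p_i}|\xi_{x_i}||u^\epsilon|$ and apply Young with the conjugate exponents $p_i/(p_i-1)$ and $p_i$: the first factor raised to $p_i/(p_i-1)$ reproduces $\xi^{p_2}|u^\epsilon_{x_i}|^{p_i}$, while the second raised to $p_i$ yields exactly $\xi^{p_2-p_i}|\xi_{x_i}|^{p_i}|u^\epsilon|^{p_i}$; choosing the Young parameter equal to $1/2$ absorbs half of the first energy term onto the left. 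For the $\epsilon$-cross term I would split the integrand as $|u^\epsilon_{x_i}|\xi^{p_2/2}$ times $\xi^{p_2/2-1}|\xi_{x_i}||u^\epsilon|$ and apply the weighted inequality $st\le \tfrac{1}{2p_2}s^2+\tfrac{p_2}{2}t^2$; the $s^2$ part is absorbed into the second energy term, and the $t^2$ part, after using $p_i-1\le p_2-1$ and $\sum_i|\xi_{x_i}|^2=|\nabla\xi|^2$, leaves the remainder $\tfrac12\,\epsilon(p_2-1)p_2^2\,\xi^{p_2-2}|\nabla\xi|^2|u^\epsilon|^2$.

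Finally I would collect terms. After the two absorptions the left-hand side retains $\tfrac12\sum_i\iint\xi^{p_2}|u^\epsilon_{x_i}|^{p_i}$ together with a non-negative multiple of the second energy term, which is simply discarded. Multiplying through by $2$ turns the constant of the nonlinear remainder into $a=a(p_1,p_2)$ and promotes the $\epsilon$-remainder coefficient from $\tfrac12\,\epsilon(p_2-1)p_2^2$ to exactly $\epsilon(p_2-1)p_2^2$, which is the claimed inequality. The only delicate point is the bookkeeping of the exponents of $\xi$ in the two Young splittings: with two distinct exponents $p_1\neq p_2$ one must check that each split leaves precisely the weight $\xi^{p_2-p_i}$ (respectively $\xi^{p_2-2}$) demanded on the right, and that $p_i\le p_2$ is used in the right places; the rest is routine.
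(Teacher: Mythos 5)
Your proposal is correct and follows exactly the route the paper intends: its entire proof is the instruction to test \eqref{anisotropicEquationRegularized} with $\phi=\xi^{p_2}u^\epsilon$, with the Young-inequality bookkeeping (the exponent splittings $\xi^{p_2(p_i-1)/p_i}\cdot\xi^{(p_2-p_i)/p_i}$ and $\xi^{p_2/2}\cdot\xi^{p_2/2-1}$, the absorptions, and the use of $p_i\le p_2$) left to the reader, which you have carried out accurately.
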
 
\begin{proof}
Use the test function $\phi=\xi^{p_2}u^\epsilon$ in \eqref{anisotropicEquationRegularized}.
\end{proof}

\begin{Lemma}
Let $u^\epsilon$ be a solution of \eqref{anisotropicEquationRegularized}.
For $\nu=1$, $2$ we have
\begin{equation*}
\begin{split}
\sum_i \iint_{\Omega}(p_i-1)\xi^2|u^\epsilon_{x_i}|^{p_i-2}(u^\epsilon_{x_ix_\nu})^2\dxy
&\leq 4\sum_i \iint_{\Omega}(p_i-1)\xi_{x_i}^2|u_{x_i}^{\epsilon}|^{p_i-2}(u^\epsilon_{x_\nu})^2\dxy\\
&\quad+ 4\epsilon(p_2-1) \iint_{\Omega}|\nabla\xi|^2(u^\epsilon_{x_\nu})^2\dxy.
\end{split}
\end{equation*}
\end{Lemma}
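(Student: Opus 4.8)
The plan is to exploit the smoothness of $u^\epsilon$ to differentiate the regularized equation and then run a Caccioppoli-type argument on the derivative $w:=u^\epsilon_{x_\nu}$. Writing the flux as $A_i(s)=|s|^{p_i-2}s+\epsilon(p_i-1)s$, so that $A_i'(s)=(p_i-1)\bigl(|s|^{p_i-2}+\epsilon\bigr)$, the first step is to replace $\phi$ by $-\phi_{x_\nu}$ in \eqref{anisotropicEquationRegularized} and integrate by parts once in the $x_\nu$ variable. Since $u^\epsilon\in C^\infty$ and each $A_i$ is $C^1$ for $p_i\geq 2$, this is fully justified and produces the weak form of the differentiated equation,
\[
\sum_i \iint_{\Omega}(p_i-1)\bigl(|u^\epsilon_{x_i}|^{p_i-2}+\epsilon\bigr)\,w_{x_i}\,\phi_{x_i}\dxy=0,
\]
valid for every $\phi\in C_0^\infty(\Omega)$. (Equivalently, one differentiates the strong form of the equation with respect to $x_\nu$.)

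Next I would insert the admissible test function $\phi=\xi^2 w$, which is again in $C_0^\infty(\Omega)$ because $w$ is smooth and $\xi$ has compact support. Then $\phi_{x_i}=\xi^2 w_{x_i}+2\xi\xi_{x_i}w$, and the identity splits into the ``good'' term $\sum_i\iint (p_i-1)(|u^\epsilon_{x_i}|^{p_i-2}+\epsilon)\,\xi^2 w_{x_i}^2$ together with a cross term $2\sum_i\iint(p_i-1)(|u^\epsilon_{x_i}|^{p_i-2}+\epsilon)\,\xi\xi_{x_i}w\,w_{x_i}$, which must be absorbed. I would estimate the cross term by Young's inequality $2|\xi w_{x_i}|\,|\xi_{x_i}w|\leq \tfrac12\,\xi^2 w_{x_i}^2+2\,\xi_{x_i}^2 w^2$ against the common nonnegative weight $(p_i-1)(|u^\epsilon_{x_i}|^{p_i-2}+\epsilon)$. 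Moving the $\tfrac12$-portion to the left-hand side and multiplying through by $2$ yields
\[
\sum_i \iint_{\Omega}(p_i-1)\bigl(|u^\epsilon_{x_i}|^{p_i-2}+\epsilon\bigr)\,\xi^2 w_{x_i}^2\dxy \leq 4\sum_i \iint_{\Omega}(p_i-1)\bigl(|u^\epsilon_{x_i}|^{p_i-2}+\epsilon\bigr)\,\xi_{x_i}^2 w^2\dxy.
\]

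Finally I would unpack the weights. On the left the contribution $\epsilon(p_i-1)\xi^2 w_{x_i}^2\geq 0$ may simply be discarded, leaving exactly $\sum_i\iint(p_i-1)\xi^2|u^\epsilon_{x_i}|^{p_i-2}(u^\epsilon_{x_ix_\nu})^2$. On the right I would split the factor $|u^\epsilon_{x_i}|^{p_i-2}+\epsilon$: the first part gives precisely the first term of the claimed bound, while for the $\epsilon$-part I would use $p_i-1\leq p_2-1$ and $\sum_i\xi_{x_i}^2=|\nabla\xi|^2$ to obtain $4\epsilon(p_2-1)\iint|\nabla\xi|^2 w^2$. Recalling $w=u^\epsilon_{x_\nu}$ and $w_{x_i}=u^\epsilon_{x_ix_\nu}$ then delivers the statement. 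There is no serious analytic obstacle here, since differentiability is guaranteed by the regularization; the only point requiring care is the bookkeeping, namely choosing the Young parameter so that exactly the factor $4$ and the constant $4\epsilon(p_2-1)$ appear, and tracking the retained versus discarded $\epsilon$-terms consistently.
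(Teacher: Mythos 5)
Your proof is correct and is essentially the paper's own argument: differentiate the regularized equation by testing with $\phi_{x_\nu}$ and integrating by parts, insert $\phi=\xi^2 u^\epsilon_{x_\nu}$, and absorb the cross term via Young's inequality with the weight $(p_i-1)(|u^\epsilon_{x_i}|^{p_i-2}+\epsilon)$. The paper leaves the Young-inequality bookkeeping implicit; your constants ($\tfrac12$-absorption giving the factor $4$, then $p_i-1\leq p_2-1$ on the $\epsilon$-term) reproduce its stated bound exactly.
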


\begin{proof}
We can use the derivative $\phi_{x_\nu}$ in place of $\phi$ as a test function in \eqref{anisotropicEquationRegularized}.
An integration by parts with respect to $x_\nu$ yields the differentiated equation
\begin{equation}\label{differentiatedEquation}
\sum_i \iint_{\Omega}(p_i-1)(|u^\epsilon_{x_i}|^{p_i-2}+\epsilon)u^\epsilon_{x_ix_\nu}\phi_{x_i}\dxy
=0.
\end{equation}
Now use the test function
\begin{equation*}
\begin{split}
\phi &= \xi^2u^\epsilon_{x_\nu}\\
\phi_{x_i}&=\xi^2u^\epsilon_{x_ix_\nu}+2\xi\xi_{x_i}u^\epsilon_{x_\nu}
\end{split}
\end{equation*}
and Young's inequality.
\end{proof}

\begin{rem}
The quantity 
$|u^\epsilon_{x_2}|^{p_2-2}(u^\epsilon_{x_1})^2$
has unfavourable exponents. A bound independent of $\epsilon$ is not immediate for the term
$$\iint_{\Omega}\xi_{x_2}^2|u^\epsilon_{x_2}|^{p_2-2}(u^\epsilon_{x_1})^2\dxy.$$
\end{rem}

\begin{cor}\label{Corollary}
Let $u^\epsilon$ be a solution of \eqref{anisotropicEquationRegularized}.
We have
\begin{equation*}
\begin{split}
\sum_i \iint_{\Omega} \xi^2 \left\lvert  \nabla\left( |u^\epsilon_{x_i}|^\frac{p_i-2}{2}u^\epsilon_{x_i} \right)  \right\rvert^2\dxy
\leq C \Bigg( \sum_i \iint_{\Omega}|\nabla\xi|^2 |u^\epsilon_{x_i}|^{p_i-2}|\nabla u^\epsilon|^2&\dxy\\
+\epsilon\iint_{\Omega}|\nabla\xi|^2|\nabla u^\epsilon|^2&\dxy\Bigg)
\end{split}
\end{equation*}
where $C=C(p_1,p_2)$.
\end{cor}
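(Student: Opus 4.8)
The plan is to recognize the left-hand side as a rewriting of the weighted second-derivative integrals already controlled by the previous Lemma (the one bounding $\iint_\Omega \xi^2|u^\epsilon_{x_i}|^{p_i-2}(u^\epsilon_{x_ix_\nu})^2\dxy$). First I would compute the gradient by the chain rule. For each $i$ and each direction $x_\nu$,
\begin{equation*}
\frac{\partial}{\partial x_\nu}\left(|u^\epsilon_{x_i}|^{\frac{p_i-2}{2}}u^\epsilon_{x_i}\right)
=\frac{p_i}{2}\,|u^\epsilon_{x_i}|^{\frac{p_i-2}{2}}u^\epsilon_{x_ix_\nu}.
\end{equation*}
Since $p_i\ge 2$, the scalar function $s\mapsto|s|^{\frac{p_i-2}{2}}s$ is $C^1$ with derivative $\frac{p_i}{2}|s|^{\frac{p_i-2}{2}}$ (interpreted as $1$ when $p_i=2$), so this identity is legitimate also where $u^\epsilon_{x_i}=0$, and $u^\epsilon$ being smooth makes the composition locally Lipschitz. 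Squaring and summing over $\nu=1,2$ gives
\begin{equation*}
\left\lvert\nabla\!\left(|u^\epsilon_{x_i}|^{\frac{p_i-2}{2}}u^\epsilon_{x_i}\right)\right\rvert^2
=\frac{p_i^2}{4}\,|u^\epsilon_{x_i}|^{p_i-2}\sum_\nu (u^\epsilon_{x_ix_\nu})^2,
\end{equation*}
so that the left-hand side of the Corollary equals $\sum_i \frac{p_i^2}{4}\iint_\Omega \xi^2 |u^\epsilon_{x_i}|^{p_i-2}\sum_\nu (u^\epsilon_{x_ix_\nu})^2\dxy$.

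Next I would apply the previous Lemma for $\nu=1$ and $\nu=2$ and add the two resulting inequalities. The sum of their left-hand sides is $\sum_i (p_i-1)\iint_\Omega \xi^2|u^\epsilon_{x_i}|^{p_i-2}\sum_\nu (u^\epsilon_{x_ix_\nu})^2\dxy$, which is exactly the quantity above up to the factor $\frac{p_i^2}{4(p_i-1)}$ in each summand. Since the function $p\mapsto\frac{p^2}{4(p-1)}$ has derivative $\frac{p(p-2)}{4(p-1)^2}\ge 0$ on $[2,\infty)$, it is increasing there, hence bounded by $\frac{p_2^2}{4(p_2-1)}$ for $2\le p_i\le p_2$; this factor is absorbed into $C=C(p_1,p_2)$. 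Thus the Corollary's left-hand side is dominated by $C$ times the sum over $\nu$ of the Lemma's right-hand sides.

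It then remains to bound that right-hand side. Using $\xi_{x_i}^2\le|\nabla\xi|^2$, the identity $\sum_\nu (u^\epsilon_{x_\nu})^2=|\nabla u^\epsilon|^2$, and $p_i-1\le p_2-1$, the sum over $\nu$ of the two Lemma right-hand sides is controlled by a constant multiple of $\sum_i \iint_\Omega |\nabla\xi|^2|u^\epsilon_{x_i}|^{p_i-2}|\nabla u^\epsilon|^2\dxy+\epsilon\iint_\Omega|\nabla\xi|^2|\nabla u^\epsilon|^2\dxy$, which is the stated right-hand side. There is no serious obstacle here: the proof is essentially the algebraic identification of the two forms of the left-hand side plus bookkeeping of constants. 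The only points requiring a moment of care are the chain-rule computation at the zeros of $u^\epsilon_{x_i}$ (harmless because $\frac{p_i-2}{2}\ge 0$) and checking that the constant $\frac{p_2^2}{4(p_2-1)}$ neither degenerates nor blows up for the admissible range of exponents, both of which rely on the standing assumption $p_i\ge 2$.
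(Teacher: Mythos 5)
Your proposal is correct and follows exactly the paper's route: the chain-rule identity $\bigl\lvert \partial_{x_\nu}(|u^\epsilon_{x_i}|^{\frac{p_i-2}{2}}u^\epsilon_{x_i})\bigr\rvert^2=\bigl(\tfrac{p_i}{2}\bigr)^2|u^\epsilon_{x_i}|^{p_i-2}(u^\epsilon_{x_ix_\nu})^2$, summation over $\nu$, and an appeal to the preceding Lemma for $\nu=1,2$ with the constants absorbed into $C(p_1,p_2)$. The extra care you take at the zeros of $u^\epsilon_{x_i}$ and in tracking the factor $\tfrac{p_i^2}{4(p_i-1)}$ is fine but not needed beyond what the paper leaves implicit.
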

\begin{proof}
Use
$$\abs{ \frac{\partial}{\partial x_\nu}\left( |u^\epsilon_{x_i}|^\frac{p_i-2}{2}u^\epsilon_{x_i} \right) }^2
= \left(\frac{p_i}{2}\right)^2|u^\epsilon_{x_i}|^{p_i-2}(u^\epsilon_{x_ix_\nu})^2$$
and sum over $\nu$.
\end{proof}

\medskip

\paragraph{\bf Convergence }$u^\epsilon\longrightarrow u$.\\

Let $u\in W^{1,\p}(\Omega)$ be a solution of equation \eqref{anisotropicEquation}. 
Here we take $B_R\subset\subset\Omega$ and let $u^\epsilon$ be the solution of \eqref{anisotropicEquationRegularized}
with boundary values $u$ on $\partial B_R$.
Subtract the weak equations \eqref{anisotropicEquation} and \eqref{anisotropicEquationRegularized} and use the test function $\phi=u^\epsilon-u$.
After some arrangements
\begin{equation*}
\begin{split}
\sum_i&\iint_{B_R} ( |u^\epsilon_{x_i}|^{p_i-2}u^\epsilon_{x_i}-|u_{x_i}|^{p_i-2}u_{x_i} )
(u^\epsilon_{x_i}-u_{x_i})\dxy\\
&+\sum_i \epsilon(p_i-1)\iint_{B_R} (u^\epsilon_{x_i}-u_{x_i})^2\dxy\\
&=  \sum_i \epsilon(p_i-1)\iint_{B_R} u_{x_i}(u^\epsilon_{x_i}-u_{x_i})\dxy\\
&\leq \frac{\epsilon}{2}\sum_i(p_i-1)\iint_{B_R}u_{x_i}^2\dxy
+\frac{\epsilon}{2}\sum_i(p_i-1)\iint_{B_R}(u^\epsilon_{x_i}-u_{x_i})^2\dxy
\end{split}
\end{equation*}
and the last term can be absorbed into the left-hand side.
The inequality
\begin{equation}\label{pInequality}
2^{2-p}|b-a|^p\leq (|b|^{p-2}b-|a|^{p-2}a)(b-a)
\end{equation}
yields
$$ \sum_i 2^{2-p_i}\iint_{B_R}|u^\epsilon_{x_i}-u_{x_i}|^{p_i}\dxy
\leq \frac{\epsilon}{2}(p_2-1)\iint_{B_R}|\nabla u|^2\dxy. $$
The next Lemma follows from this.
\begin{Lemma}\label{gradconv}
Assume $u\in W^{1,\p}(\Omega)$ solves equation \eqref{anisotropicEquation} and let $u^\epsilon$ be the solution of \eqref{anisotropicEquationRegularized} in $B_R$ with boundary values $u$ on $\partial B_R$. Then
\begin{equation*}
\begin{split}
&u^\epsilon \longrightarrow u \;\;\text{uniformly in}\;\; B_R\\
&u^\epsilon_{x_i} \longrightarrow u_{x_i} \;\;\text{in}\;\; L^{p_i}(B_R)
\end{split}
\end{equation*}
as $\epsilon\to 0$.
\end{Lemma}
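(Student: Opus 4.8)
The plan is to read both convergences off the inequality displayed immediately before the statement, the only genuinely new ingredient being an embedding to upgrade gradient convergence to uniform convergence.

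First I would dispose of the derivative convergence. Since $u\in W^{1,\p}(\Omega)$ with $p_i\ge2$ and $B_R$ has finite measure, H\"older's inequality gives $u_{x_i}\in L^{p_i}(B_R)\subset L^2(B_R)$, so $\iint_{B_R}|\nabla u|^2\dxy<\infty$. Hence the right-hand side of
\begin{equation*}
\sum_i 2^{2-p_i}\iint_{B_R}|u^\epsilon_{x_i}-u_{x_i}|^{p_i}\dxy
\leq \frac{\epsilon}{2}(p_2-1)\iint_{B_R}|\nabla u|^2\dxy
\end{equation*}
is $O(\epsilon)$. As every summand on the left is nonnegative, each one tends to $0$, which is exactly $u^\epsilon_{x_i}\to u_{x_i}$ in $L^{p_i}(B_R)$, with the explicit rate $\epsilon^{1/p_i}$. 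This settles the second assertion.

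For the uniform convergence I would set $w^\epsilon=u^\epsilon-u$. Since $u^\epsilon$ and $u$ carry the same boundary values on $\partial B_R$, one has $w^\epsilon\in W^{1,\p}_0(B_R)$, and the first step gives $w^\epsilon_{x_i}\to0$ in $L^{p_i}(B_R)$. I would then appeal to the anisotropic Sobolev (Troisi) inequality on $W^{1,\p}_0(B_R)$. In the plane the governing exponent is the harmonic mean $\bar p$ defined by $2/\bar p=1/p_1+1/p_2$, and under $2\le p_1\le p_2$ one checks at once that $\bar p>2$ precisely when $p_2>2$. In this supercritical range the embedding $W^{1,\p}_0(B_R)\hookrightarrow L^\infty(B_R)$ holds, and $\norm{w^\epsilon}_{L^\infty(B_R)}$ is controlled by a product (or sum) of the norms $\norm{w^\epsilon_{x_i}}_{L^{p_i}(B_R)}$; since each of these vanishes, so does $\norm{w^\epsilon}_{L^\infty(B_R)}$, giving $u^\epsilon\to u$ uniformly. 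The lone remaining case $p_1=p_2=2$ occurs only for Theorem \ref{Theorem1} with $p=2$, where the equation is the Laplace equation and the conclusion is classical.

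The hard part is this last step: the passage from anisotropic gradient convergence to uniform convergence. It cannot be obtained from the two coordinate embeddings in isolation, since in the plane $W^{1,2}_0(B_R)$ fails to embed into $L^\infty$; one genuinely needs the mixed anisotropic inequality together with the fact that the larger exponent $p_2$ exceeds the dimension $2$. Everything else is a direct reading of the estimate already in hand.
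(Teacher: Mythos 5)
Your proof is correct, but the uniform-convergence half takes a genuinely different route from the paper. The gradient convergence is the same (and essentially forced): both you and the authors read it off the display preceding the lemma, using $|\nabla u|\in L^{2}(B_R)$. For the uniform convergence the paper splits on $p_1$: if $p_1>2$ it applies the \emph{isotropic} Morrey embedding in the plane to $u^\epsilon-u\in W^{1,p_1}_0(B_R)$ (both partials of the difference lie in $L^{p_1}$ on the bounded ball and tend to $0$ there), while in the borderline case $p_1=2$ it uses the structure of the equation: $u^\epsilon$ obeys the maximum/minimum principle, hence is monotone in the sense of Lebesgue, and Lemma \ref{LemmaLebesgue} gives the family $\{u^\epsilon\}$ a uniform logarithmic modulus of continuity, whence uniform convergence. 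You instead split on $p_2$: for $p_2>2$ (equivalently $\bar p>2$) you apply Troisi's anisotropic embedding $W^{1,\p}_0(B_R)\hookrightarrow L^\infty(B_R)$ to $w^\epsilon=u^\epsilon-u$, and for $p_1=p_2=2$ you note the equation is Laplace's, where indeed $u^\epsilon=u$ by uniqueness of the Dirichlet problem. Your route is purely functional-analytic, never invoking the maximum principle, and it yields an explicit rate $\norm{u^\epsilon-u}_{L^\infty(B_R)}\lesssim\epsilon^{1/\bar p}$; its cost is importing Troisi's theorem, an external result the paper does not otherwise need, and whose constant necessarily depends on the domain (state it as, e.g., $\norm{w}_{L^\infty(B_R)}\le C(p_1,p_2)\,|B_R|^{\frac12-\frac1{\bar p}}\bigl(\norm{w_{x_1}}_{L^{p_1}}\norm{w_{x_2}}_{L^{p_2}}\bigr)^{1/2}$ for $w\in W^{1,\p}_0(B_R)$; the domain-free product form is false for $\bar p>2$, as isotropic dilation shows). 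One caveat: your closing claim that the mixed anisotropic inequality is \emph{genuinely needed} is overstated. It is needed only in the single case $p_1=2<p_2$, and only if one insists on an embedding argument: for $p_1>2$ plain Morrey suffices (this is the paper's choice), and even for $p_1=2$ the paper circumvents embeddings entirely by exploiting monotonicity of $u^\epsilon$ together with its own oscillation lemma.
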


\begin{proof}
  It remains to establish the convergence  of the functions.
If $p_1>2$ it follows from Morrey's inequality in the plane that $$u^\epsilon\longrightarrow u \;\;\text{uniformly in}\;\; B_R.$$
The case $p_1=2$ follows from Lemma \ref{LemmaLebesgue} below, since the maximum/minimum principle obviously is valid for $u^\epsilon$. 
\end{proof}

\section{Oscillation of monotone functions}
A continuous function $v:\Omega\longrightarrow\mathbb{R}$ is monotone (in the sense of Lebesgue) if
$$\max_{\overline{D}}v=\max_{\partial D}v\quad \text{and}\quad  \min_{\overline{D}}v=\min_{\partial D}v$$
for all subdomains $D\subset\subset \Omega$. For the next Lemma it us enough that
$$\osc{B_r}v=\osc{\partial B_r}v$$
holds for circles. Monotone functions are discussed in \cite{Manf}.
\begin{Lemma}[Lebesgue]\label{LemmaLebesgue}
Let $\Omega\subset\mathbb{R}^2$. If $v\in W^{1,2}_{loc}(\Omega)\cap C(\Omega)$ is monotone, then
\begin{equation}\label{LebesgueEstimate}
(\osc{B_{r_1}}v)^2
\log\left(\frac{r_2}{r_1}\right)
\leq \pi \iint_{B_{r_2}}|\nabla v|^2\dxy
\end{equation}
holds for all concentric disks $B_{r_1}\subset B_{r_2}\subset\subset \Omega$.
\end{Lemma}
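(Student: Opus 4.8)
The plan is to prove the estimate circle by circle and then integrate in the radial variable to manufacture the logarithm. Fix the concentric disks $B_{r_1}\subset B_{r_2}\subset\subset\Omega$ and write $v$ in polar coordinates $(\rho,\theta)$ about the common center, so that $|\nabla v|^2=v_\rho^2+\rho^{-2}v_\theta^2$, and in particular $\abs{v_\theta}\leq \rho\,\abs{\nabla v}$.

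First I would record the consequence of monotonicity that transfers the oscillation on the small circle to every larger one. Since $B_{r_1}\subset B_\rho$ for $r_1\leq\rho\leq r_2$ and $v$ is monotone, $\osc{B_{r_1}}v\leq\osc{B_\rho}v=\osc{\partial B_\rho}v$. Thus it suffices to bound $\osc{\partial B_\rho}v$ on each circle of radius $\rho\in[r_1,r_2]$. The second step is the one-dimensional estimate on such a circle. By the ACL characterization of Sobolev functions (Fubini in polar coordinates) the function $\theta\mapsto v(\rho,\theta)$ is, for almost every $\rho$, absolutely continuous on $[0,2\pi]$ with derivative $v_\theta$, and $\int_0^{2\pi}\abs{v_\theta}^2\,d\theta$ is finite. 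Let the maximum and minimum of $v$ on $\partial B_\rho$ be attained at two angles; these split the circle into two complementary arcs, and integrating $v_\theta$ along either arc recovers $\osc{\partial B_\rho}v$ up to sign. Averaging the two bounds gives $\osc{\partial B_\rho}v\leq\tfrac12\int_0^{2\pi}\abs{v_\theta}\,d\theta$, and then the Cauchy--Schwarz inequality followed by $\abs{v_\theta}\leq\rho\,\abs{\nabla v}$ yields
\[
\Big(\osc{\partial B_\rho}v\Big)^2\leq \pi\,\rho^2\int_0^{2\pi}\abs{\nabla v(\rho,\theta)}^2\,d\theta
\]
for almost every $\rho\in[r_1,r_2]$ (the arc-splitting is what keeps the constant down to $\pi$).

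Finally, combining with the monotonicity bound gives $(\osc{B_{r_1}}v)^2\leq\pi\rho^2\int_0^{2\pi}\abs{\nabla v}^2\,d\theta$ for a.e. $\rho$; dividing by $\rho$ and integrating over $\rho\in[r_1,r_2]$ turns the left side into $(\osc{B_{r_1}}v)^2\log(r_2/r_1)$, while the right side, via $\rho\,d\rho\,d\theta=\dxy$ and Fubini, becomes $\pi\iint_{B_{r_2}\setminus B_{r_1}}\abs{\nabla v}^2\dxy\leq\pi\iint_{B_{r_2}}\abs{\nabla v}^2\dxy$, which is the claim. The main obstacle I anticipate is the measure-theoretic bookkeeping in the second step: justifying that $v$ restricted to almost every circle is absolutely continuous with the tangential trace of $\nabla v$ as its derivative, so that the fundamental theorem of calculus on circles and the passage from the angular integrals back to the area integral are legitimate under only $W^{1,2}_{loc}(\Omega)\cap C(\Omega)$ regularity. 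The continuity of $v$ is what makes $\osc{\partial B_\rho}v$ meaningful pointwise and secures that the extrema used in the arc argument are genuinely attained.
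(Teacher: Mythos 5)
Your proposal is correct and follows essentially the same route as the paper: a circle-by-circle oscillation bound on $\partial B_\rho$ via the angular derivative and Cauchy--Schwarz, monotonicity to replace $\osc{\partial B_\rho}v$ by $\osc{B_{r_1}}v$, and integration in $d\rho/\rho$ to produce the logarithm. The only differences are cosmetic: you justify the low-regularity case directly through the ACL characterization on almost every circle (where the paper simply invokes approximation by smooth functions), and your two-arc averaging even yields the slightly better constant $\pi/2$ where the paper's half-circle argument gives $\pi$.
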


\begin{proof}
As on page 388 of \cite{L} an integration in polar coordinates yields
$$v(r,\theta_2)-v(r,\theta_1)=\int_{\theta_1}^{\theta_2} \frac{\partial v(r,\theta)}{\partial\theta}\,\mbox{d}\theta$$
for a smooth function $v$. 
It is enough to integrate over a half circle and use the Cauchy-Schwartz inequality to obtain
$$ (\osc{\partial B_r} v )^2 \leq \pi \int_0^{2\pi}\abs{\frac{\partial v}{\partial\theta}}^2\,\mbox{d}\theta. $$
Since
$$|\nabla v|^2=\left(\frac{\partial v}{\partial r}\right)^2+\frac{1}{r^2}\left(\frac{\partial v}{\partial \theta}\right)^2
\geq \frac{1}{r^2}\left(\frac{\partial v}{\partial \theta}\right)^2$$
we have
$$\frac{1}{r}(\osc{\partial B_r}v)^2
\leq \pi \int_0^{2\pi}|\nabla v|^2r\,\mbox{d}\theta$$
integrated over a circle of radius $r$.
By the monotonicity
$$\osc{\partial B_r}v=\osc{B_r}v\geq \osc{B_{r_1}}v$$
when $r\geq r_1$.
An integration with respect to $r$ yields \eqref{LebesgueEstimate}.
The Lemma follows by approximation.
\end{proof}

We shall apply the oscillation Lemma to the functions
$$|u^\epsilon_{x_i}|^\frac{p_i-2}{2}u^\epsilon_{x_i}.$$
To this end, we prove that $u^\epsilon_{x_i}$ is monotone. This is credited to \cite{BB}.

\begin{Prop}
Let $u^\epsilon$ denote a solution of equation \eqref{anisotropicEquationRegularized}.
Then
$$\min_{\partial B_r}u^\epsilon_{x_\nu}\leq u^\epsilon_{x_\nu}(x)\leq \max_{\partial B_r}u^\epsilon_{x_\nu}$$
when $x\in B_r$, $B_r\subset\subset\Omega$, and $\nu=1$, $2$.
\end{Prop}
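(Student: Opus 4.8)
The plan is to recognize the differentiated equation \eqref{differentiatedEquation} as a \emph{linear} uniformly elliptic equation satisfied by the single smooth function $w := u^\epsilon_{x_\nu}$, and then to invoke the maximum principle. Writing $w_{x_i} = u^\epsilon_{x_ix_\nu}$, equation \eqref{differentiatedEquation} takes the form
\begin{equation*}
\sum_i \iint_{\Omega} a_i(x)\, w_{x_i}\,\phi_{x_i}\dxy = 0, \qquad a_i(x) := (p_i-1)\bigl(|u^\epsilon_{x_i}(x)|^{p_i-2}+\epsilon\bigr),
\end{equation*}
valid for all $\phi\in C_0^\infty(\Omega)$. Thus $w$ is a weak solution of $\sum_i \partial_{x_i}(a_i w_{x_i}) = 0$, that is, of $\mathrm{div}(A(x)\nabla w)=0$ with the diagonal coefficient matrix $A = \mathrm{diag}(a_1, a_2)$ and no zeroth-order term.

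Next I would verify uniform ellipticity on a fixed ball $B_r \subset\subset \Omega$. Since $u^\epsilon$ is smooth (by the elliptic regularity quoted after \eqref{anisotropicEquationRegularized}), each coefficient $a_i$ is continuous, hence bounded above on $\overline{B_r}$; and because $\epsilon>0$ one has the lower bound $a_i(x) \geq (p_i-1)\epsilon > 0$ everywhere. This is the decisive role of the regularization: for fixed $\epsilon$ the equation is genuinely uniformly elliptic, even at points where some $u^\epsilon_{x_i}$ vanishes, which for $p_i>2$ would otherwise make the coefficient degenerate.

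Because $w$ is smooth, I can expand the divergence form into the non-divergence form $\sum_i a_i w_{x_ix_i} + \sum_i (a_i)_{x_i} w_{x_i} = 0$, a linear second-order elliptic equation with positive principal coefficients $a_i$ and no term of order zero. The classical maximum and minimum principle then applies on each ball $B_r \subset\subset \Omega$, giving $\max_{\overline{B_r}} w = \max_{\partial B_r} w$ and $\min_{\overline{B_r}} w = \min_{\partial B_r} w$, which is exactly the asserted two-sided bound for $w = u^\epsilon_{x_\nu}$. Alternatively one may argue directly in divergence form through the standard weak maximum principle for $\mathrm{div}(A\nabla w) = 0$ with bounded, uniformly elliptic coefficients, thereby avoiding the differentiation of $a_i$.

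As for the main obstacle, there is no deep difficulty: the only point requiring care is the ellipticity check, namely confirming \emph{both} a positive lower bound (supplied by $\epsilon$) and a finite upper bound (supplied by the smoothness of $u^\epsilon$) for the $a_i$. I would stress that this estimate concerns the \emph{regularized} solution at a fixed $\epsilon>0$; the uniformity that fails as $\epsilon\to 0$ is irrelevant here, since the passage to the limit is carried out separately in Lemma \ref{gradconv}.
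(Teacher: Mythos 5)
Your proposal is correct in substance and rests on the same two observations as the paper: that $w=u^\epsilon_{x_\nu}$ is a weak solution of the linear divergence-form equation \eqref{differentiatedEquation} with diagonal coefficients $a_i=(p_i-1)\bigl(|u^\epsilon_{x_i}|^{p_i-2}+\epsilon\bigr)$, and that the $\epsilon$-term supplies the uniform ellipticity. The difference is what happens next. You invoke the maximum principle as a black box; the paper instead proves exactly the statement it needs, in a few lines, by testing \eqref{differentiatedEquation} with the truncation $\phi^+=(u^\epsilon_{x_\nu}-C)^+$ (vanishing on $\partial B_r$ and extended by zero outside $B_r$), where $C$ bounds $u^\epsilon_{x_\nu}$ on $\partial B_r$: discarding the nonnegative terms coming from $|u^\epsilon_{x_i}|^{p_i-2}$ and keeping the $\epsilon$-terms gives $\epsilon\sum_i(p_i-1)\iint_{B_r}|(u^\epsilon_{x_\nu}-C)^+_{x_i}|^2\dxy\leq 0$, so $(u^\epsilon_{x_\nu}-C)^+$ is constant, hence identically zero. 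This is of course nothing but the standard proof of the weak maximum principle specialized to this setting, so the two arguments are morally identical; the paper's version is self-contained and uses only the weak formulation, with no regularity of the coefficients beyond measurability. That last point matters for your primary route: expanding into non-divergence form requires differentiating $a_i$, and $t\mapsto|t|^{p_i-2}$ is \emph{not} differentiable at $t=0$ when $2<p_i<3$ (it is only H\"older continuous there), so the drift terms $(a_i)_{x_i}$ need not be bounded near zeros of $u^\epsilon_{x_i}$; relatedly, the smoothness of $u^\epsilon$ furnished by elliptic regularity is genuinely unproblematic only away from such degenerate points. Your fallback --- the weak maximum principle for $\mathrm{div}(A\nabla w)=0$ with bounded, measurable, uniformly elliptic coefficients --- avoids this issue entirely and is the version of your argument that should be made primary; with that choice the proof is sound.
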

\begin{proof}
Fix $\nu$.
Assume first that $u^\epsilon_{x_\nu}\leq C$ on $\partial B_r$, where $C$ is a constant. We claim that $u^\epsilon_{x_\nu}\leq C$ in $B_r$.

Use the test function
$$\phi^+(x)=(u^\epsilon_{x_\nu}-C)^+=\max\{u^\epsilon_{x_\nu}-C,\, 0\}$$
defined in $\overline{B_r}$.
Note that $\phi^+=0$ on $\partial B_r$ and set $\phi^+=0$ outside $B_r$.
Then $\phi^+$ is admissible in the differentiated equation \eqref{differentiatedEquation}. 
It follows that
\begin{equation*}
\begin{split}
0&=\iint_{B_r}(p_i-1)(|u^\epsilon_{x_i}|^{p_i-2}+\epsilon)u^\epsilon_{x_ix_\nu}(u^\epsilon_{x_\nu}-C)^+_{x_i}\dxy\\
&\geq \epsilon\sum_i(p_i-1)\iint_{B_r}|(u^\epsilon_{x_\nu}-C)^+_{x_i}|^2\dxy
\end{split}
\end{equation*}
and hence
$$(u^\epsilon_{x_\nu}-C)^+_{x_i}=0 \;\;\text{in}\;\;B_r$$
for $i=1$, $2$.
Thus $(u^\epsilon_{x_\nu}-C)^+$ is constant in $B_r$. We conclude that 
$u^\epsilon_{x_\nu}\leq C$ in $B_r$ as desired.

A similar proof goes for the case $u^\epsilon_{x_\nu}\geq C$. Now use 
$$\phi^-(x)=(C-u^\epsilon_{x_\nu})^+.$$
\end{proof}

\begin{cor}
Let $u^\epsilon$ denote a solution of equation \eqref{anisotropicEquationRegularized}.
For $i=1$, $2$ the function
$$|u^\epsilon_{x_i}|^\frac{p_i-2}{2}u^\epsilon_{x_i}$$
is monotone in $\Omega$.
\end{cor}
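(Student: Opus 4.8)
The plan is to deduce the monotonicity of $g_i := |u^\epsilon_{x_i}|^{(p_i-2)/2}u^\epsilon_{x_i}$ directly from the maximum and minimum principle for $u^\epsilon_{x_i}$ established in the preceding Proposition, by observing that $g_i$ is obtained from $u^\epsilon_{x_i}$ through composition with an increasing real function. The entire content is that monotonicity in the sense of Lebesgue is stable under post-composition with a continuous increasing map, and the Proposition has already supplied the analytic input.

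First I would record the elementary one-variable fact that the map $\Phi(t)=|t|^{(p_i-2)/2}t$ is continuous and strictly increasing on $\mathbb{R}$. This uses $p_i\geq 2$, so that the exponent $(p_i-2)/2$ is nonnegative; then $\Phi'(t)=\frac{p_i}{2}|t|^{(p_i-2)/2}>0$ for $t\neq 0$, with $\Phi(0)=0$, and $\Phi$ is continuous across the origin. In particular $\Phi$ is a continuous increasing bijection of $\mathbb{R}$ onto itself, so it commutes with the operations of taking maximum and minimum over any compact set.

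Next, since $u^\epsilon$ is smooth, $u^\epsilon_{x_i}$ is continuous, and hence so is $g_i=\Phi(u^\epsilon_{x_i})$. Given a ball $B_r\subset\subset\Omega$, the Proposition asserts $\min_{\partial B_r}u^\epsilon_{x_i}\leq u^\epsilon_{x_i}\leq \max_{\partial B_r}u^\epsilon_{x_i}$ in $B_r$, equivalently $\max_{\overline{B_r}}u^\epsilon_{x_i}=\max_{\partial B_r}u^\epsilon_{x_i}$ together with the analogous identity for the minimum. Applying the increasing function $\Phi$ and using that it commutes with max and min yields
$$\max_{\overline{B_r}}g_i=\Phi\bigl(\max_{\overline{B_r}}u^\epsilon_{x_i}\bigr)=\Phi\bigl(\max_{\partial B_r}u^\epsilon_{x_i}\bigr)=\max_{\partial B_r}g_i,$$
and the same identity for the minimum. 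In particular $\osc{B_r}g_i=\osc{\partial B_r}g_i$, which is exactly the form of monotonicity required by Lemma \ref{LemmaLebesgue}.

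There is no genuine obstacle in this argument; the only point deserving a moment of care is confirming that $\Phi$ is monotone across $t=0$ and not merely on each half-line separately. This is precisely where the continuity of $\Phi$ at the origin and the hypothesis $p_i\geq 2$ (which keeps the exponent nonnegative) enter, and it guarantees that the passage from the max/min principle for $u^\epsilon_{x_i}$ to that for $g_i$ is valid globally rather than only away from the zero set of $u^\epsilon_{x_i}$.
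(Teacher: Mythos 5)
Your proposal is correct and is exactly the argument the paper leaves implicit: the Corollary follows from the preceding Proposition because $\Phi(t)=|t|^{\frac{p_i-2}{2}}t$ is a continuous increasing bijection of $\mathbb{R}$ (here $p_i\geq 2$ is used), so it commutes with $\max$ and $\min$ over $\overline{B_r}$ and $\partial B_r$, transferring the max/min principle from $u^\epsilon_{x_i}$ to $\Phi(u^\epsilon_{x_i})$. Your additional remark that the resulting identity $\osc{B_r}\Phi(u^\epsilon_{x_i})=\osc{\partial B_r}\Phi(u^\epsilon_{x_i})$ on circles is precisely the form of monotonicity needed in Lemma \ref{LemmaLebesgue} matches the paper's own observation preceding that Lemma.
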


\section{The case $p_1=p_2\geq 2$}

Let $p_1=p_2=p\geq 2$ and let $u\in W^{1,p}(\Omega)$ be a solution of \eqref{anisotropicEquation}.
In order to prove Theorem \ref{Theorem1} we denote by $u^\epsilon$ the solution  of the regularized equation \eqref{anisotropicEquationRegularized} in $B_{2R} \subset\subset\Omega$ with boundary values $u^\epsilon=u$ on $\partial B_{2R}$.
Let $r\leq R$. By Lebesgue's Lemma
$$\osc{B_r}^2\left( |u^\epsilon_{x_i}|^\frac{p-2}{2}u^\epsilon_{x_i} \right)\log\left(\frac{R}{r}\right)
\leq \pi \iint_{B_R}|\nabla(|u^\epsilon_{x_i}|^\frac{p-2}{2}u^\epsilon_{x_i})|^2\dxy.$$
Observe that 
$$2^{2-p}(\osc{B_r}u^\epsilon_{x_i})^p\log\left(\frac{R}{r}\right)
\leq \osc{B_r}^2\left( |u^\epsilon_{x_i}|^\frac{p-2}{2}u^\epsilon_{x_i} \right)\log\left(\frac{R}{r}\right)$$
by the elementary inequality \eqref{pInequality}.
Choose the test function $\xi$ in Corollary \ref{Corollary} so that $0\leq \xi\leq 1$, $\xi=1$ in $B_R$, $\xi=0$ in $\Omega\setminus B_{3R/2}$, and $|\nabla\xi|\leq CR^{-1}$. Thus we can majorize the right hand side:
\begin{equation*}
\begin{split}
\iint_{B_R} |\nabla(|u^\epsilon_{x_i}|^\frac{p-2}{2}u^\epsilon_{x_i})|^2 \dxy
\leq \frac{C_p'}{R^2}\left( \iint_{B_{2R}}|\nabla u^\epsilon|^p\dxy
+\epsilon\iint_{B_{2R}}|\nabla u^\epsilon|^2\dxy \right)
\end{split}
\end{equation*}
which is uniformly bounded in $\epsilon$ ($0<\epsilon<1$). To see this, it is enough to test equation \eqref{anisotropicEquationRegularized} with $\phi=u^\epsilon-u$ and use Young's inequality to get
\begin{equation*}
\sum_i \iint_{B_{2R}}  |u^\epsilon_{x_i}|^{p}
\leq C(p)\iint_{B_{2R}}|\nabla u|^{p}\dxy+\epsilon(p-1)\iint_{B_{2R}}|\nabla u|^2\dxy.
\end{equation*}

Since, by Lemma \ref{gradconv}, $u^\epsilon_{x_i}\longrightarrow u_{x_i}$ a.e. in $B_{2R}$ as $\epsilon\to 0$ (at least for a subsequence), we finally obtain
$$(\osc{B_r}u_{x_i})^p\log\left(\frac{R}{r}\right)
\leq \frac{C_p}{R^2}\iint_{B_{2R}}|\nabla u|^p\dxy.$$
This concludes the proof of Theorem \ref{Theorem1}.

\begin{rem}
For $B_{4R}\subset\subset\Omega$ let  $\xi\in C_0^\infty(B_{4R})$, $0\leq \xi\leq 1$, $\xi=1$ in $B_{2R}$, and $|\nabla\xi|\leq CR^{-1}$.
Testing equation \eqref{anisotropicEquation} with $\phi=u\xi^{p_2}$  and using Young's inequality we obtain
\begin{equation*}
\sum_i \iint_{B_{4R}}\xi^{p_2}|u_{x_i}|^{p_i}\dxy
\leq C(p_1,p_2) \sum_i\iint_{B_{4R}} \xi^{p_2-p_i}|\nabla\xi|^{p_i}\,|u|^{p_i}\dxy.
\end{equation*}
Hence we can write the estimate of Theorem \ref{Theorem1} in the form
$$(\osc{B_r}u_{x_i})^p \leq \frac{D_p}{R^{2+p}\log\left(\frac{R}{r}\right)}
\iint_{B_{4R}}|u|^p\dxy$$
for $r<R$.
\end{rem}

\section{The case $2\leq p_1<p_2<p_1+2$}
We shall adapt the proof in \cite{ELM} to obtain the following summability result for the derivative of the solution $u^\epsilon$ of the regularized equation \eqref{anisotropicEquationRegularized}. 
\footnote{The assumption $|z|^{p_1}\leq f(z)$ on page $417$, eqn. (2.2) of \cite{ELM} is not valid here, but we have $|z_1|^{p_1}+|z_2|^{p_2}\leq f(z)$ instead.}
We omit the details and refer to \cite{ELM} for missing parts.

\begin{Prop}\label{PropositionBeta}
Let $B_R\subset\subset\Omega$ and let $u^\epsilon$ be a solution of equation \eqref{anisotropicEquationRegularized} in $B_R$.
Then there exists an exponent $\beta=\beta(p_1,p_2)$ and a constant $C=C(p_1,p_2,r,R)$ such that 
\begin{equation}\label{inequalityP2}
\iint_{B_r}|u^\epsilon_{x_1}|^{p_2}\dxy
\leq C \left( \iint_{B_R}(1+|u^\epsilon_{x_1}|^{p_1}+|u^\epsilon_{x_2}|^{p_2})\dxy \right)^\beta
\end{equation}
for all $r<R$.
\end{Prop}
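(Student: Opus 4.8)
The plan is to run a Moser–type iteration in the spirit of \cite{ELM}, but carried out directly on the smooth solution $u^\epsilon$ of the regularized equation and with all constants independent of $\epsilon$. Throughout I suppress the superscript $\epsilon$ and write $w=(1+|\nabla u|^2)^{1/2}$. Since $u$ is smooth I may differentiate \eqref{anisotropicEquationRegularized} and work with \eqref{differentiatedEquation}. For a parameter $\gamma\geq 0$ I would test \eqref{differentiatedEquation} with $\phi=\xi^2 u_{x_\nu}w^{2\gamma}$, sum over $\nu=1,2$, expand $\phi_{x_i}$, and absorb the terms carrying $\xi\xi_{x_i}$ by Young's inequality. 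This produces a weighted second–derivative estimate of Caccioppoli type, the analogue of Corollary \ref{Corollary} with the extra weight $w^{2\gamma}$,
\[
\sum_{i,\nu}\iint \xi^2\big(|u_{x_i}|^{p_i-2}+\epsilon\big)w^{2\gamma}(u_{x_ix_\nu})^2\dxy
\leq C_\gamma \sum_i \iint |\nabla\xi|^2\big(|u_{x_i}|^{p_i-2}+\epsilon\big)w^{2\gamma+2}\dxy .
\]
Writing $V_i=|u_{x_i}|^{(p_i+2\gamma)/2}$ and using $|u_{x_i}|\leq w$ so that $w^{2\gamma}\geq |u_{x_i}|^{2\gamma}$ in the $i$--th block, the left–hand side dominates $c\sum_i\iint \xi^2|\nabla V_i|^2\dxy$; here it is the structural bound $|z_1|^{p_1}+|z_2|^{p_2}\leq f(z)$ from the footnote, rather than the single–exponent bound of \cite{ELM}, that is invoked.

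Expanding $w^{2\gamma+2}$ on the right, the diagonal contributions are the harmless $\iint|\nabla\xi|^2 V_i^2\dxy$, while the delicate term is the one singled out in the remark on the unfavourable exponents, namely $\iint |\nabla\xi|^2 |u_{x_2}|^{p_2-2}|u_{x_1}|^{2\gamma+2}\dxy$, in which the poorly integrable derivative $u_{x_1}$ enters to a power that is not controlled a priori. This is the main obstacle, and it is aggravated by the anisotropy: the diagonal part of the left–hand side controls $\partial_{x_1}V_1$ cleanly, whereas the mixed derivative $\partial_{x_2}V_1$ is only controlled with the ``wrong'' weight $|u_{x_2}|^{p_2-2}$, which cannot be removed by Hölder. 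The device, following \cite{ELM}, is to feed these two directional estimates into an \emph{anisotropic} Sobolev inequality in the plane, combined with the isotropic Gagliardo–Nirenberg (Ladyzhenskaya) inequality $\iint (\xi V_i)^4\dxy\leq C\big(\iint (\xi V_i)^2\dxy\big)\big(\iint|\nabla(\xi V_i)|^2\dxy\big)$, which gains a definite factor in the integrability exponent at each stage.

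It is here that the restriction $p_2<p_1+2$ is decisive. At each stage one disposes of $u_{x_1}\in L^{q_1}_{\mathrm{loc}}$ and $u_{x_2}\in L^{q_2}_{\mathrm{loc}}$, and the cross term is estimated by Hölder with balanced exponents; feasibility amounts to an inequality of the form $\tfrac{p_2-2}{q_2}+\tfrac{2\gamma+2}{q_1}\leq 1$, and the Sobolev step upgrades $q_1$ to $2(p_1+2\gamma)$. The per–step gain beats the per–step loss coming from the gap $p_2-p_1$ precisely when $p_2-p_1<2$, so that the admissible exponents increase by a fixed positive amount and, after finitely many steps, $\gamma$ reaches $(p_2-2)/2$; at that value $V_1^2=|u_{x_1}|^{p_2}$, which is \eqref{inequalityP2}. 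The induction is started from $u_{x_1}\in L^{p_1}$, $u_{x_2}\in L^{p_2}$, i.e. from the finiteness of $\iint_{B_R}(1+|u_{x_1}|^{p_1}+|u_{x_2}|^{p_2})\dxy$, and the parameter $\epsilon\in(0,1)$ enters only through lower–order terms carrying the factor $\epsilon$, so every constant is $\epsilon$--free. Since each Sobolev step multiplies the previous integral bound, the final estimate appears with the base quantity raised to the power $\beta=\beta(p_1,p_2)$ equal to the product of the Sobolev exponents over the finitely many stages, with a constant depending on $r,R$ only through the cut–off functions. The hardest point to make rigorous is exactly the bookkeeping of this coupled iteration: one must check that at every stage the power demanded of the good derivative $u_{x_2}$ does not outrun the integrability already secured for it, which is the quantitative content of $p_2<p_1+2$; the remaining computations are routine and are the ones \cite{ELM} supplies.
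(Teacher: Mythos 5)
Your overall strategy (a Caccioppoli--Sobolev iteration in the spirit of \cite{ELM}, run on the quantities $V_i=|u_{x_i}|^{(p_i+2\gamma)/2}$) is the right family of ideas, and you correctly single out the dangerous cross term $\iint|\nabla\xi|^2|u_{x_2}|^{p_2-2}|u_{x_1}|^{2\gamma+2}\dxy$ --- it is exactly the term flagged in Remark 2.3 of the paper. But the mechanism you propose for controlling it does not work: you pay for it ``by H\"older with balanced exponents'' against integrability \emph{already secured}, with feasibility condition $\frac{p_2-2}{q_2}+\frac{2\gamma+2}{q_1}\leq 1$. At the first step of the induction you only have $q_1=p_1$, $q_2=p_2$, $\gamma=0$, and the condition reads
\begin{equation*}
\frac{p_2-2}{p_2}+\frac{2}{p_1}\leq 1
\quad\Longleftrightarrow\quad
\frac{2}{p_1}\leq\frac{2}{p_2}
\quad\Longleftrightarrow\quad
p_2\leq p_1,
\end{equation*}
which contradicts $p_1<p_2$. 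So the iteration cannot even start, and no assumption of the form $p_2<p_1+2$ repairs this: the failure is independent of the size of the gap $p_2-p_1$; it is intrinsic to the attempt to pay for the cross term with norms one already possesses. A workable scheme must let the cross term be paid for by the quantities being \emph{produced} at the same step --- e.g.\ Gagliardo--Nirenberg applied to $\xi V_1$ and $\xi V_2$ themselves, followed by absorption into the left-hand side --- and absorption requires knowing \emph{beforehand} that the absorbed integral is finite. (A minor slip besides: with $V_1=|u_{x_1}|^{(p_1+2\gamma)/2}$ you need $\gamma=(p_2-p_1)/2$, not $(p_2-2)/2$, to get $V_1^2=|u_{x_1}|^{p_2}$.)

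That a priori finiteness is precisely what the paper's proof is engineered to supply, and it is the part your proposal omits. The paper does not iterate on $u^\epsilon$ directly: it introduces a \emph{second} regularization, adding $\sigma\iint_{B_R}|v_{x_1}|^{p_2}\dxy/p_2$ to the functional, so that the integrand has standard $p_2$-growth and the minimizer $u^{\epsilon,\sigma}$ has $u^{\epsilon,\sigma}_{x_1}\in L^{p_2}$ from the outset; the reverse-H\"older estimate of \cite{ELM} is then derived for $u^{\epsilon,\sigma}$ with constants independent of $\sigma$, and the exponent restriction enters through the admissible range $0<b<2-p_2/\delta$ and the step size $2\kappa=\frac{2p_1-p_2}{p_2-p_1}p_1$ (i.e.\ through $p_2<2p_1$, a consequence of $p_2<p_1+2$ and $p_1\geq 2$), not through a per-step ``gain versus loss'' of the kind you describe. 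The substantial remaining work is to transfer the bound back to $u^\epsilon$ as $\sigma\to 0$: the natural comparison $I^{\epsilon,\sigma}_{B_R}(u^{\epsilon,\sigma})\leq I^{\epsilon}_{B_R}(u^\epsilon)+\frac{\sigma}{2}\iint_{B_R}|u^\epsilon_{x_1}|^{p_2}\dxy$ is useless unless one already knows $\iint_{B_R}|u^\epsilon_{x_1}|^{p_2}\dxy<\infty$, which is essentially the statement being proved; the paper breaks this circularity by mollifying the boundary data ($u^*=u*\rho$), obtaining a $\sigma$-free bound, letting $\sigma\to 0$, and then identifying the limit via uniqueness of minimizers and weak lower semicontinuity. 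Your proposal has no substitute for this step beyond the assertion that $u^\epsilon$ is smooth, which at best gives qualitative interior finiteness but not the $\epsilon$-uniform quantitative bound --- and in any case the quantitative scheme itself, as written, fails at step one.
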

The proof is based on a double regularization. The Euler-Lagrange equation of the variational integral
\begin{equation*}
I^{\epsilon,\sigma}_{B_R}(v)
=\sum_i\iint_{B_R}\left(\frac{|v_{x_i}|^{p_i}}{p_i}+\epsilon(p_i-1)\frac{|v_{x_i}|^2}{2}\right)\dxy
+\sigma \iint_{B_R}\frac{|v_{x_1}|^{p_2}}{p_2}\dxy
\end{equation*}
is
\begin{equation}\label{EquationDoubleRegularized} \iint_{B_R}\sum_i(|u_{x_i}|^{p_i-2}+\epsilon(p_i-1))u_{x_i}\phi_{x_i} +\sigma|u_{x_1}|^{p_2-2}u_{x_1}\phi_{x_1} \dxy=0,
\end{equation}
for all $\phi\in C_0^\infty(B_R)$.
Let $u^{\epsilon, \sigma}$ denote the solution of \eqref{EquationDoubleRegularized} with boundary values $u^{\epsilon, \sigma}=u^\epsilon$ on $\partial B_R$.
A similar reasoning as in \cite{ELM}, pages 421-427,
leads for any $\delta$, $p_1\leq \delta<p_2$, to the estimate 
\begin{equation*}
\left( \iint_{B_{\alpha^2R}}|u^{\epsilon, \sigma}_{x_1}|^\frac{p_1}{1-b} \dxy\right)^\frac{1-b}{2}
\leq C(p_1,p_2,\delta,R,\alpha)
\left( \left(\iint_{B_{\alpha R}} |u^{\epsilon, \sigma}_{x_1}|^{p_1}\dxy\right)^\frac{1}{2}
+ \left(\iint_{B_{\alpha R}} |\nabla u^{\epsilon, \sigma}|^{\delta}\dxy\right)^\frac{1}{2}\right)
\end{equation*}
where $0<\alpha<1$.
This is valid for every $b$ in the range
$$0<b<2-\frac{p_2}{\delta}.$$
The idea is to iterate this estimate over concentric disks of radii $\alpha R$, $\alpha^2R$, $\alpha^3R$, ...( a finite number will do) starting with $\delta_0=p_1$ and increasing the exponent at each step.
If, for instance, 
$$2\kappa = \frac{2p_1-p_2}{p_2-p_1}p_1$$
we can always find an admissible $b$ such that
$$\frac{p_1}{1-b}=\delta+\kappa.$$
Hence the powers in the iteration become $p_1$, $p_1+\kappa$, $(p_1+\kappa)+\kappa=p_1+2\kappa$, ..., $p_1+m\kappa$.
This yields the Lemma, but for $u^{\epsilon,\sigma}$ insted of $u^\epsilon$.
The limit procedure $\sigma\to 0$ leads to the desired result, when one uses the minimization property
$$ I^{\epsilon, \sigma}_{B_R}(u^{\epsilon,\sigma})
\leq I^{\epsilon}_{B_R}(u^\epsilon)+\frac{\sigma}{2}\iint_{B_R}|u^\epsilon_{x_1}|^{p_2}\dxy ,$$
provided that we already know
\begin{equation}\label{difficultTerm}
\iint_{B_R}|u^\epsilon_{x_1}|^{p_2}\dxy <\infty.
\end{equation}
To get rid of this restriction, we use a convenient convolution with some mollifier $\rho$:
$$u^*=u*\rho$$
approximating the solution $u$ of the original equation \eqref{anisotropicEquation}.
Let $u^{\epsilon,*}$ be the solution of the regularized equation \eqref{anisotropicEquationRegularized} in $B_R$, with boundary values $u^{\epsilon,*}=u^*$ on $\partial B_R$. 
Let $u^{\epsilon,\sigma,*}$ be a solution of \eqref{EquationDoubleRegularized} with the same boundary values.
Then the difficult term \eqref{difficultTerm} can be dismissed, since now
\begin{equation*}
\begin{split}
I^{\epsilon,\sigma}_{B_R}(u^{\epsilon,\sigma,*})
&\leq I^{\epsilon}_{B_R}(u^*)+\frac{\sigma}{2}\iint_{B_R}|u^*_{x_1}|^{p_2}\dxy\\
&\leq I^{\epsilon}_{B_{R^*}}(u)+\frac{\sigma}{2}\iint_{B_{R^*}}|u_{x_1}|^{p_2}\dxy
\end{split}
\end{equation*}
where $R^*>R$, and $R^*-R$ can be made as small as we please (depending on $\rho$ in the convolution). We used the fact that the convolution is a contraction.
We now have a bound free of $\sigma$ and can take the limit as $\sigma\to 0$.
The result is
$$ \iint_{B_R}|u^{\epsilon,*}_{x_1}|^{p_2}\dxy
\leq C\left( \iint_{B_{R^*}}(1+|u_{x_1}|^{p_1}+|u_{x_2}|^{p_2}) \dxy\right)^\beta. $$
As $u^*=u*\rho \to u$, we conclude from
$$I_{B_R}(u^{\epsilon,*})\leq I^{\epsilon}_{B_R}(u^{\epsilon,*}) \leq I^{\epsilon}_{B_R}(u^*)\to I^{\epsilon}_{B_R}(u)$$
that the weak limit in $L^2(B_R)$ of $\nabla u^{\epsilon,*}$ must be $\nabla u^\epsilon$, since the minimizer of this strictly convex variational integral is unique.
By weak lower semicontinuity
$$\iint_{B_r}|u^{\epsilon}_{x_1}|^{p_2}\dxy
\leq C\left( \iint_{B_R}(1+|u_{x_1}|^{p_1}+|u_{x_2}|^{p_2}) \dxy\right)^\beta$$
since $R^*\to R$.
This version of inequality \eqref{inequalityP2} is enough for us.

\section{Proof of Theorem \ref{Theorem2}}
\begin{proof}[Proof of Theorem \ref{Theorem2}]
This is almost the same as the proof of Theorem \ref{Theorem1}. 
For the regularized equation \eqref{anisotropicEquationRegularized} one first obtains as before, though with a few obvious changes,
$$R^2(\osc{B_r}u^\epsilon_{x_i})^{p_i}\log\left(\frac{R}{r}\right)
\leq A \iint_{B_{2R}}(|\nabla u^\epsilon|^{p_1}+|\nabla u^\epsilon|^{p_2})\dxy$$
when $r\leq R$, $B_{2R}\subset\subset\Omega$ and for a constant $A=A(p_1,p_2)$.

In the case $p_2 < p_1 + 2$ we proceed as follows. 
If $B_{4R}\subset\subset\Omega$, using Proposition \ref{PropositionBeta},  we can bound the right hand side by
$$C(p_1,p_2,R) \left( \iint_{B_{4R}}\left(1+|u_{x_1}|^{p_1}+|u_{x_2}|^{p_2}\right)\dxy \right)^{\beta(p_1,p_2)}$$ 
which is a finite quantity independent of $\epsilon$. The desired result follows as $\epsilon\to 0$.

 The general case can be extracted from Theorem 1.4 in \cite{BLPV}.
\end{proof}

\begin{rem}
The exact dependence on $R$ is not worked out here. The result that comes from the iteration in the proof of Proposition \eqref{PropositionBeta} above, if all steps are computed, is not illuminating.
\end{rem}

\bibliography{reference_bib}\nocite{*}
\bibliographystyle{abbrv}

\end{document}